\theoremstyle{plain}
\newtheorem{theorem}{Theorem}[section]
\newtheorem{cor}[theorem]{Corollary}
\theoremstyle{definition}
\newtheorem{defi}[theorem]{Definition}
\theoremstyle{remark}
\newtheorem{rem}[theorem]{Remark}
\numberwithin{equation}{section}
\newcommand{\mspc}{\ensuremath{\mathcal{M}_{g,n}}}
\newcommand{\dmcmp}{\ensuremath{\overline{\mathcal{M}}_{g,n}}}
\newcommand{\lcmp}{\ensuremath{\mathcal{L}\left[\dmcmp\times\mathbb{R}_{\geq 0}^n\right]}}
\newcommand{\slcmp}{\ensuremath{\mathcal{L}_{g,n}}}
\newcommand{\lcmppt}{\ensuremath{\mathcal{L}^{\circ}\left[\dmcmp\times\mathbb{R}_{\geq 0}^n\right]}}
\newcommand{\slcmppt}{\ensuremath{\mathcal{L}^{\circ}_{g,n}}}
\newcommand{\barlcmp}{\ensuremath{\mathcal{L}^\infty\left[\dmcmp\times\mathbb{R}_{\geq 0}^n\right]}}
\newcommand{\sbarlcmp}{\ensuremath{\mathcal{L}^\infty_{g,n}}}
\newcommand{\gcat}{\ensuremath{\Gamma_{\mathrm{Iso}}((g,n))}}
\newcommand{\OTFT}{\ensuremath{\mathcal{OTFT}}}
\newcommand{\Ass}{\mathcal{A}\!\mathit{ss}}
\newcommand{\gf}{\ensuremath{\mathbb{R}}}
\newcommand{\innprod}{\ensuremath{\langle -,- \rangle}}
\newcommand{\cotimes}{\ensuremath{\hat{\otimes}}}
\newcommand{\noproof}{\begin{flushright} \ensuremath{\square} \end{flushright}}
\DeclareMathOperator{\id}{id}
\DeclareMathOperator{\Aut}{Aut}
\DeclareMathOperator{\colim}{colim}
\DeclareMathOperator{\im}{Im}
\begin{document}
\title{On the extension of a TCFT to the boundary of the moduli space}
\author{Alastair Hamilton}
\address{University of Connecticut, Mathematics Department, 196 Auditorium Road, Storrs, CT 06269. USA.}
\email{hamilton@math.uconn.edu}
\begin{abstract}
The purpose of this paper is to describe an analogue of a construction of Costello in the context of finite-dimensional differential graded Frobenius algebras which produces closed forms on the decorated moduli space of Riemann surfaces. We show that this construction extends to a certain natural compactification of the moduli space which is associated to the modular closure of the associative operad, due to the absence of ultra-violet divergences in the finite-dimensional case. We demonstrate that this construction is equivalent to the ``dual construction'' of Kontsevich.
\end{abstract}
\keywords{Moduli spaces of curves, Topological Conformal Field Theory, modular operad, orbi-cell complexes.}
\subjclass[2000]{14D21, 14D22, 18D50, 57N65, 81Q30.}
\maketitle
%
%
%

\section{Introduction}

\subsection{Background}

Consider the open moduli space $\mspc$ of compact Riemann surfaces of genus $g$ with $n$ marked points. There is a construction, due to Costelllo \cite{costform} and known as a \emph{topological conformal field theory} (or TCFT for short), which associates to every Calabi-Yau elliptic space, a closed differential form $\omega_{\mspc}$ on this moduli space. Perhaps the simplest example of a Calabi-Yau elliptic space is the space $\Omega^\bullet(M,\mathbb{C})$ of complex-valued differential forms on a compact Riemannian manifold. More interesting examples include differential forms with values in the endomorphism bundle of a vector bundle over a compact Riemannian manifold, equipped with a metric and a flat connection.

The differential form $\omega_{\mspc}$ on this moduli space contains information of a physical character. Integrating this form over the moduli space $\mspc$  corresponds to integrating over all possible complex structures on a surface, and physically represents path integrals in which world-lines are replaced by world-sheets traced out by interacting open strings. Examples of physical theories which can be accommodated in this context include Chern-Simons theory and Yang-Mills theory, cf. \cite{costform}.

Unfortunately, it is not possible to integrate this form over the moduli space, since it is not a compactly supported form. This differential form diverges as we approach the boundary of the moduli space $\mspc$, due to the singular nature of the heat kernel at time $t=0$ from which this form is constructed. Such divergences commonly go under the heading of `ultra-violet divergences'.

The purpose of this article is to explain that this problem is due to the infinite-dimensional nature of the Calabi-Yau elliptic space. Specifically, we consider the finite-dimensional analogue of this construction and show that in this case, there is a natural compactification of the moduli space $\mspc$ to which the differential form $\omega_{\mspc}$ extends. This is due to the fact that the terms which diverge in the infinite-dimensional case, are in fact finite in the finite-dimensional case.

Notwithstanding the fact that the problematic terms are finite in the finite-dimensional case, the problem of extending this form to a compactification of the moduli space is a nontrivial problem. The compactification that we will use is a quotient of the well-known Deligne-Mumford compactification $\dmcmp$. This quotient was considered by Looijenga in \cite{looi} as a minor generalisation of the compactification used by Kontsevich \cite{kontairy} in his proof of Witten's conjectures. The issue is the complicated topological structure of the moduli space $\mspc$ and the highly stratified nature of its compactification $\dmcmp$. If we choose a compactification (such as the one-point compactification) in which the boundary is not large enough, or does not have the right combinatorial structure, extending the form will not be possible.

We will see that the combinatorics of the extension problem are solved by the combinatorial properties of an \emph{open topological field theory}, as famously axiomatised by Atiyah-Segal et al. Specifically, the terms which lie at the boundary of the moduli space are the terms of the open topological field theory associated to the Calabi-Yau elliptic space. It is well-known that one of the first consequences of the axioms of an open topological field theory is that the vector space in question must be finite-dimensional. Hence, if our Calabi-Yau elliptic space is infinite-dimensional, the terms which lie at the boundary of the moduli space, fail to exist; or to be more precise, these terms are \emph{infinite}. This accounts for why our form $\omega_{\mspc}$ diverges as we approach the boundary of moduli space.

The layout of the paper is as follows. In Section \ref{sec_forms} we recall standard material about the orbi-cell decomposition of the moduli space of curves due to Harer \cite{harer}, Mumford, Penner \cite{penner} and Thurston and the notion of orbi-cellular forms on the moduli space. In Section \ref{sec_operad} we recall the framework of modular operads due to Getzler and Kapranov \cite{getkap}, and the notion of an open topological field theory as described in this context by \cite{dualfeyn}, following the classic axioms of Atiyah-Segal et al. In Section \ref{sec_construction}, we describe Costello's construction of a closed form on $\mspc$ in the finite-dimensional context and in Section \ref{sec_extend} we show that this construction extends to the aforementioned compactification of the moduli space. In Section \ref{sec_dual}, we show that if our finite-dimensional Calabi-Yau elliptic space (which is nothing more than a differential graded Frobenius algebra in this case) is \emph{contractible}, this construction coincides with the construction introduced by Kontsevich in \cite{kontfeyn}, as formulated precisely by Chuang and Lazarev in \cite{dualfeyn}.

\subsection{Acknowledgements}

The author would like to thank Mahmoud Zeinalian for introducing him to and explaining the content of \cite{costform}. The author would also like to thank Andrey Lazarev for a number of helpful discussions.

\subsection{Outline of Costello's construction}

Let us briefly describe Costello's construction, which produces a closed form on the moduli space of curves. We start with the de Rham algebra $A:=\Omega^\bullet(M,\mathbb{C})$ of complex-valued differential forms on a compact Riemannian manifold $M$. More generally, we can start with any Calabi-Yau elliptic space, cf. \cite{costform}; but this example will be sufficient to demonstrate the general features of the construction. We consider the operator $e^{-t\Delta}:A\to A$, constructed from the Laplacian $\Delta:=[d,d_\ast]$, which describes how the states in $A$ evolve according to the heat equation. The results of \cite{heatkernels} imply that this operator is represented by a kernel
\[ K_t \in A^{\cotimes 2} \]
called the \emph{heat kernel}. Regarding $K$ as a function of $t$, the heat kernel is used to construct the closed differential form
\[ \alpha:= K+(d_\ast\cotimes\id)[K]\cdot dt\in A^{\cotimes 2}\cotimes\Omega^\bullet(\mathbb{R}_+). \]
The fact that this form is closed follows from the heat equation.

Given a compact decorated Riemann surface $R$ with negative Euler characteristic and at least one marked point, the results of Jenkins \cite{jenkins} or Strebel \cite{strebel} allow us to uniquely associate to this Riemann surface a graph $\gamma$, known as a \emph{ribbon graph}, lying embedded in the Riemann surface $R$, of which it is a deformation retract. To each edge of this graph is naturally ascribed a length $t > 0$. To each edge of this graph we attach the closed form $\alpha$ and at each vertex we integrate the product of the incoming differential forms over $M$. This yields a closed form $\omega_{\mspc}$ on $\mspc$.

\begin{figure}[htp]
\centering
\includegraphics{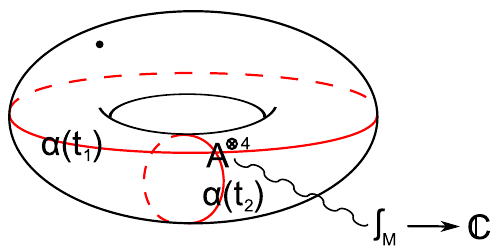}
\caption{As $t_2 \to 0$, the surface develops a nodal singularity and the integral diverges.}
\end{figure}

The problem arises as the length $t$ of a loop tends towards zero. As we shrink the length of a loop of this graph, which lies embedded in the surface $R$, this surface will develop a nodal singularity. Such nodal surfaces are precisely the types of surfaces lying at the boundary of the Deligne-Mumford compactification $\dmcmp$. Furthermore, as the length $t$ of this loop approaches zero, the heat kernel $K_t$ attached to this loop via the closed form $\alpha$ develops a divergence. This causes the associated form $\omega_{\mspc}$ to also develop a divergence as we approach the boundary of the moduli space. Contracting an edge which is not a loop poses no problems however, essentially because the diagonal in $M\times M$ on which the heat kernel blows up is a set of measure zero in $M\times M$.

\section{Moduli spaces of curves and orbi-cellular forms} \label{sec_forms}

\subsection{Ribbon graph decomposition of moduli space}

Consider the decorated moduli space $\mspc\times\mathbb{R}_{+}^n$ of Riemann surfaces of genus $g$ with $n\geq 1$  labeled marked points. The numbers in $\mathbb{R}_+^n$ are known as the \emph{perimeters} of the marked points. We will denote the moduli space with unlabeled marked points by $[\mspc\times\mathbb{R}_+^n]/\mathbb{S}_n$. It is well known from the work of Harer \cite{harer}, Mumford, Penner \cite{penner} and Thurston that this space is an orbi-cell complex. The orbi-cells are indexed by combinatorial objects called \emph{ribbon graphs} (cf. \cite{mondello} and \cite{zvonkine} for a review of this theorem).

\begin{defi}
A \emph{ribbon graph} $\gamma$ is a set, also denoted by $\gamma$, consisting of the half-edges of the graph, together with the following pieces of information:
\begin{enumerate}
\item
A partition $V(\gamma)$ of $\gamma$ corresponding to the \emph{vertices} of the graph. The cardinality of a vertex is known as its \emph{valency}, which must be $\geq 3$.
\item
A partition $E(\gamma)$ of $\gamma$ into pairs, corresponding to the \emph{edges} of $\gamma$. An edge which joins a vertex to itself is called a \emph{loop}.
\item
A cyclic ordering of the half-edges at each vertex.
\end{enumerate}
An \emph{orientation} on a ribbon graph is just an ordering of its edges modulo even permutations. There is an obvious notion of isomorphism of ribbon graphs as a mapping between the half-edges of the graph preserving the above structures. A \emph{metric ribbon graph} is just a ribbon graph together with an additional piece of data consisting of an assignment of a positive real number to each edge of the ribbon graph.
\end{defi}

A well-known construction (cf. for instance \cite{mondello} or \cite{zvonkine}) produces a (decorated) Riemann surface with unlabelled marked points from any metric ribbon graph by gluing complex strips together whose widths coincide with the lengths of the edges. This yields for every ribbon graph $\gamma$ a map
\[ \mathbb{R}_+^{E(\gamma)}\to[\mspc\times\mathbb{R}_+^{n}]/\mathbb{S}_n, \]
where $\mathbb{R}_+^{E(\gamma)}$ denotes the affine space of real-valued functions on the edges of $\gamma$. The finite group $\Aut(\gamma)$ of automorphisms of $\gamma$ acts naturally on $\mathbb{R}_+^{E(\gamma)}$ and the above map is invariant with respect to this action. The images of these maps partition the moduli space $[\mspc\times\mathbb{R}_+^n]/\mathbb{S}_n$ into \emph{orbi-cells}. This follows from the Jenkins-Strebel theory of quadratic differentials \cite{jenkins}, \cite{strebel}. These orbi-cells are indexed by isomorphism classes of ribbon graphs. The boundary of an orbi-cell $\gamma$ is found by allowing the length of an edge $e$ (not a loop), to tend to zero. The region of moduli space obtained by doing so simply coincides with the image of the orbi-cell corresponding to $\gamma/e$, the ribbon graph with the edge $e$ contracted.

Hence the decorated moduli space $[\mspc\times\mathbb{R}_+^{n}]/\mathbb{S}_n$ is an (open) orbi-cell complex whose orbi-cells are indexed by isomorphism classes of ribbon graphs. Unfortunately, it is not possible to use the above theory to obtain an orbi-cellular decomposition of the well-known Deligne-Mumford compactification $\dmcmp$ of the moduli space. This is because we need at least one marked point with positive perimeter to be able to apply the Jenkins-Strebel theory to an irreducible component of a nodal surface in $\dmcmp$. For this reason, we consider a quotient $\lcmp$ of the Deligne-Mumford compactification, in which we forget the complex structure on those irreducible components which have either no marked points, or on which the perimeters assigned to these marked points are all zero; that is to say that we remember only their \emph{topological type}. More precisely, two decorated stable curves are equivalent if, when we contract those irreducible components on which there are no marked points of positive perimeter and label the resulting nodal singularities by the number of marked points and \emph{arithmetic} genus of the contracted surfaces, the resulting curves are biholomorphic through a map which preserves the labels at the nodes; see e.g. \cite{mondello} or \cite{zvonkine} for details. It is known from the work of Kontsevich \cite{kontairy} and Looijenga \cite{looi} that this compactification also has an orbi-cellular decomposition by \emph{stable} ribbon graphs.

\begin{defi} \label{def_stabrib}
A \emph{stable ribbon graph}\footnote{These stable ribbon graphs coincide with those considered in \cite{compact}, despite their alternative description here.} $\gamma$ is a set of half-edges together with partitions of the half-edges into sets representing the vertices and edges of $\gamma$ as before, along with the following additional piece of information: rather than having a cyclic order at each vertex, we instead assign to each vertex $v\in V(\gamma)$ a compact oriented topological surface $S_v$ with nonempty boundary, together with an embedding of the incident half-edges at that vertex into the boundary of $S_v$.

\begin{figure}[htp]
\centering
\includegraphics[scale=1.3]{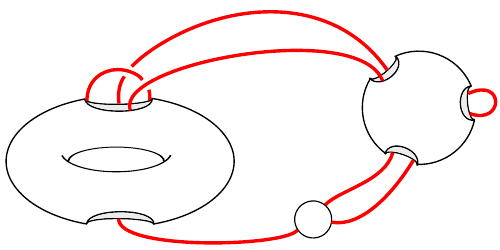}
\caption{The vertices of a stable ribbon graph are decorated by topological surfaces.}
\end{figure}

This allows us to partition the incident half-edges into \emph{cycles}, which consist of those half-edges embedded into a common boundary component, and to give a cyclic ordering of the half-edges in each cycle, coming from their embedding into the common boundary circle. If we consider only those stable ribbon graphs whose topological vertices are \emph{disks}, we recover the previous notion of a \emph{ribbon graph}.

An isomorphism of two stable ribbon graphs is a mapping between the half-edges which preserves the edge and vertex structure of the graphs; together with homeomorphisms between the topological surfaces of corresponding vertices, which must preserve the embeddings of the incident half-edges into the boundary.

As before, an orientation on a stable ribbon graph is just provided by ordering the edges. A \emph{stable metric ribbon graph} is just a stable ribbon graph with an assignment of a nonnegative real number to each edge.
\end{defi}

Again, it is possible to construct a stable curve in $[\dmcmp\times\mathbb{R}_{\geq 0}^n]/\mathbb{S}_n$ from a stable metric ribbon graph by gluing complex strips whose widths correspond to the lengths of the edges. The topological surfaces at each vertex of the stable ribbon graph are not assigned a complex structure, hence this construction takes values in the quotient $\lcmp$. Given an edge (or loop) $e$ in a stable graph $\gamma$ we can consider the stable graph $\gamma/e$, which is defined by contracting the edge $e$ in the graph. Since each of the ends of this edge are embedded in the boundary of a topological surface, we can form a new topological surface by gluing along the ends of this edge (cf. \cite{dualfeyn}).

\begin{figure}[htp]
\centering
\includegraphics{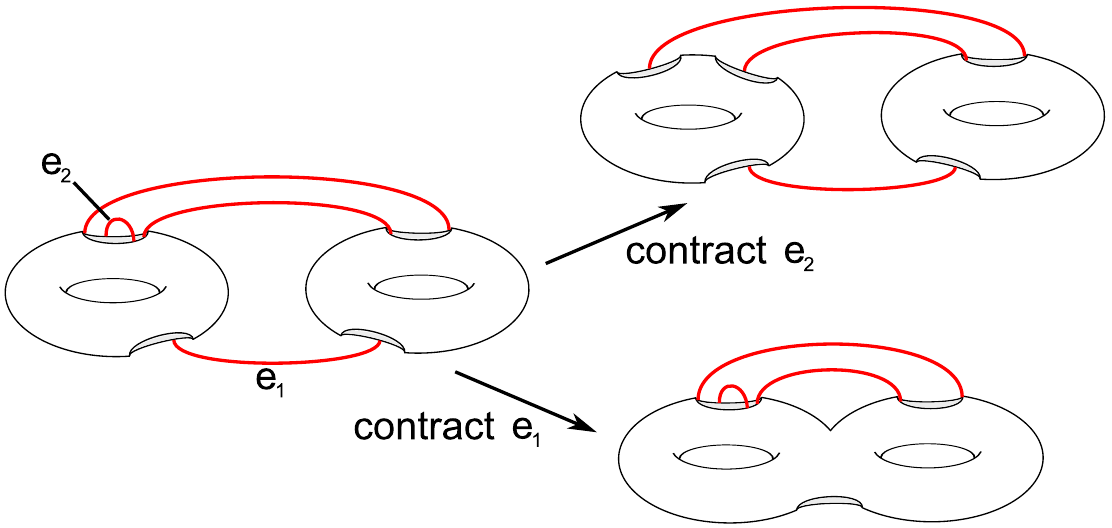}
\caption{Contracting edges and loops in a stable ribbon graph.}
\end{figure}

The construction of Riemann surfaces from stable metric ribbon graphs yields a commutative diagram:
\begin{displaymath}
\xymatrix{ \mathbb{R}_{\geq 0}^{E(\gamma)} \ar[r]^{\psi_\gamma} & \lcmp \\ \mathbb{R}_{\geq 0}^{E(\gamma/e)} \ar[ru]^{\psi_{\gamma/e}} \ar[u]^{i_{\gamma,e}} }
\end{displaymath}
where $i_{\gamma,e}$ is the canonical embedding via the zero-section.

The space $\lcmp$ is not compact. There are two natural ways to compactify this space. One is to consider the one point compactification $\lcmppt$ and the other is to consider the compactification $\barlcmp$ in which we allow the lengths of the edges of our stable ribbon graphs to become infinite. From the discussion above we have the following (cf. \cite{kontairy}, \cite{looi}, \cite{mondello}, \cite{zvonkine} \cite{dualfeyn}):

\begin{theorem}
The spaces $\barlcmp$ and $\lcmppt$ are orbi-cellular complexes.
\begin{enumerate}
\item
The orbi-cells in $\lcmppt$ are indexed by stable ribbon graphs $\gamma$ (plus a 0-cell for the point at infinity and a 0-cell for the equivalence class made up of all those surfaces with no nonvanishing perimeters), and the boundary of an orbi-cell $\gamma$ is composed of the orbi-cells $\{\gamma/e\}_{e\in E(\gamma)}$ given by contracting the edges of $\gamma$.
\item
The orbi-cells in $\barlcmp$ are indexed by stable ribbon graphs with black and white edges; the white edges of a stable ribbon graph correspond to edges of infinite length, hence those stable ribbon graphs with at least one white edge index orbi-cells lying completely on the boundary at $\infty$ of $\barlcmp$.
\end{enumerate}
\end{theorem}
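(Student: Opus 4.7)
The plan is to bootstrap the theorem from the (already-cited) orbi-cellular decomposition of $\lcmp$ itself, and then separately analyze each of the two compactifications.

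First I would record the base case: the Jenkins--Strebel machinery, together with the quotient construction that forgets complex structure on irreducible components with no positive-perimeter marked points, yields an orbi-cellular decomposition of $\lcmp$ whose cells are the images $\psi_\gamma(\mathbb{R}_{+}^{E(\gamma)}/\Aut(\gamma))$ indexed by stable ribbon graphs $\gamma$, with face maps provided by the commutative diagram just before the theorem (setting $t_e=0$ lands in the cell of $\gamma/e$). This is essentially the combined theorem of Harer--Mumford--Penner--Thurston together with the Kontsevich--Looijenga extension, for which I would cite \cite{kontairy}, \cite{looi}, \cite{mondello}, \cite{zvonkine}.

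For part (1), concerning $\lcmppt$, I would argue as follows. The space $\lcmp$ is locally compact Hausdorff, so its one-point compactification is well-defined and the added point $\infty$ is naturally a $0$-cell whose attaching map is trivial. The second extra $0$-cell reflects the quotient defining $\lcmp$: all decorated stable curves with identically zero perimeters get collapsed (the Jenkins--Strebel construction requires at least one positive perimeter on each irreducible component), so their equivalence class in $\lcmp$ is a single point, which inherits the structure of a $0$-cell. The remaining cells and attaching maps are pulled back from the orbi-cellular decomposition of $\lcmp$ and the description of the boundary of the cell $\gamma$ in terms of the contractions $\{\gamma/e\}_{e\in E(\gamma)}$ is then simply the content of the commutative diagram, combined with the observation that sequences escaping to infinity in $\lcmp$ converge, by definition, to the added point.

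For part (2), concerning $\barlcmp$, I would extend each cell $\mathbb{R}_{\geq 0}^{E(\gamma)}$ to the closed cube $[0,\infty]^{E(\gamma)}$ and extend $\psi_\gamma$ to this cube. A face on which a subset $W\subseteq E(\gamma)$ of coordinates equals $\infty$ should be identified with the cell of a new stable ribbon graph obtained from $\gamma$ by marking the edges in $W$ white. Concretely, sending the length $t_e\to\infty$ pulls apart the glued strip along $e$ and produces a degenerate surface in $\barlcmp$ whose topological type is read off exactly by the combinatorial operation of coloring $e$ white; this needs a short direct check on the strip-gluing construction. One then verifies that these extended attaching maps descend through $\Aut(\gamma)$ and that the collection of cells so obtained covers $\barlcmp$ and is closure-finite, with the boundary of a cell now consisting of both the contractions $\gamma/e$ (for black edges) and the white-coloring operations (for edges becoming infinite).

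The main obstacle, as I see it, is precisely this last geometric identification at the infinite-length faces: one must show that the limit of the strip-gluing construction as $t_e\to\infty$ is well-defined in $\barlcmp$ and that its topological/combinatorial type matches exactly the prescription of stable ribbon graphs with white edges, so that distinct cells do not accidentally overlap and no boundary stratum is missed. Once this compatibility is established, the orbi-cellular axioms (closure-finiteness, equivariance under $\Aut(\gamma)$, compatibility of attaching maps) follow in the same manner as in the already-established decomposition of $\lcmp$.
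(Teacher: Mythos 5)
Your outline is consistent with the paper, which states this theorem without proof, deferring to the cited cell decompositions of Kontsevich, Looijenga, Mondello and Zvonkine; deriving both compactified cell structures from the decomposition of $\lcmp$ via the commutative diagram and the contractions $\gamma/e$ is exactly the intended reading of ``from the discussion above.'' The one step you flag as the main obstacle --- identifying the faces where $t_e\to\infty$ with white-edge stable ribbon graphs --- is, in the paper's setup, essentially the definition of $\sbarlcmp$ (the compactification is introduced by allowing edge lengths to become infinite), so your check is sound, and the only substantive verification needed is the one you already note for part (1): that the extended characteristic maps are continuous, i.e.\ that letting any edge length diverge forces the decorated surface to leave every compact subset of $\lcmp$.
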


\begin{rem} \label{rem_contractible}
Both the spaces $\lcmp$ and $\barlcmp$ are contractible, a contracting homotopy being provided by simply shrinking the lengths of the edges to zero. The space $\lcmppt$ is not contractible; it is clear to see that it is the suspension of the space $\mathcal{L}[\mspc\times\Delta_{n-1}]$ which has nontrivial cohomology.
\end{rem}

When it is expedient to do so, we shall denote the spaces $\lcmp$, $\barlcmp$ and $\lcmppt$ by $\slcmp$, $\sbarlcmp$ and $\slcmppt$ respectively. The above discussion allows us to define the following complexes, which compute the homology of the above spaces.

\begin{defi}
\
\begin{enumerate}
\item
The complex $C_\bullet(\slcmppt)$ of orbi-cellular chains on $\slcmppt$ is linearly generated by isomorphism classes of oriented stable ribbon graphs (plus two 0-cells for the points), modulo the relation that switching the orientation of a graph changes the coefficient of that graph by a factor of $(-1)$. The differential of a graph $\gamma$ is given by the formula:
\[ \partial(\gamma):=\sum_{e\in E(\gamma)} \gamma/e \]
\item
The complex $C_\bullet(\sbarlcmp)$ of orbi-cellular chains on $\sbarlcmp$ is generated by isomorphism classes of oriented stable ribbon graphs \emph{with black and white edges}, modulo the same relation. The differential of a graph $\gamma$ is given by the formula:
\[ \partial(\gamma):=\sum_{e\in E_\mathrm{Black}(\gamma)} [\gamma/e + e\backslash\gamma], \]
where $e\backslash\gamma$ denotes the operation of replacing a black edge with a white edge and $E_\mathrm{Black}(\gamma)$ denotes the black edges of $\gamma$.
\end{enumerate}
\end{defi}

\subsection{Orbi-cellular forms}

\begin{defi}
An orbi-cellular form on the moduli space $\sbarlcmp$ is just an assignment to every (isomorphism class of) stable ribbon graph $\gamma$, a differential form $\omega_\gamma$ on $[0,\infty]^{E_{\mathrm{Black}}(\gamma)}$ satisfying the following conditions\footnote{Here $[0,\infty]$ is the one point compactification of $\mathbb{R}_{\geq 0}$. We give it the smooth structure of $[0,1]$ using the obvious map $x\mapsto \frac{1}{1-x}-1$.}:
\begin{enumerate}
\item \label{item_invariance}
$\omega_\gamma$ is invariant with respect to the natural action of $\Aut(\gamma)$
\item
These forms satisfy the gluing conditions
\begin{displaymath}
\begin{split}
i_{\gamma,e}^*[\omega_\gamma] & = \omega_{\gamma/e}, \\
i_{e,\gamma}^*[\omega_\gamma] & = \omega_{e\backslash\gamma}; \\
\end{split}
\end{displaymath}
where $i_{\gamma,e}$ denotes the zero-section as before, and $i_{e,\gamma}$ denotes the $\infty$-section.
\end{enumerate}
These forms can be differentiated in the usual way and hence they form a complex $\Omega^\bullet(\sbarlcmp)$.
\end{defi}

\begin{rem}
It is condition \eqref{item_invariance} that allows us to associate a differential form to any stable ribbon graph in an unambiguous way.
\end{rem}

\begin{rem}
Orbi-cellular forms can be defined on the spaces $\slcmp$ and $[\mspc\times\mathbb{R}^n_+]/\mathbb{S}_n$ in an analogous way (in which case only the first gluing condition is necessary).
\end{rem}

There is a natural map from the complex of orbi-cellular forms to the complex of orbi-cellular cochains given by integrating the differential forms over the corresponding orbi-cells.
\begin{equation} \label{eqn_integrate}
\int:\Omega^\bullet(\sbarlcmp) \to C^\bullet(\sbarlcmp).
\end{equation}
The following theorem is fairly standard in the absence of an action by a finite group, cf. for instance \cite{sullivan}.

\begin{theorem} \label{thm_formiso}
This map is a quasi-isomorphism and hence the complex $\Omega^\bullet(\sbarlcmp)$ computes the cohomology of $\sbarlcmp$.
\end{theorem}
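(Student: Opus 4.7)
The plan is to verify first that $\int$ is a chain map via Stokes' theorem together with the gluing conditions, and then to establish the quasi-isomorphism by a skeletal-filtration argument that reduces the global statement to a local computation on each orbi-cell. For the chain-map property, given an orbi-cellular form $\omega=(\omega_\gamma)$ and an orbi-cell indexed by a stable ribbon graph $\gamma$ of dimension $p=|E_{\mathrm{Black}}(\gamma)|$, Stokes' theorem gives $\int_{\gamma}d\omega_\gamma=\int_{\partial\gamma}\omega_\gamma$. The codimension-one faces of the cube $[0,\infty]^{E_{\mathrm{Black}}(\gamma)}$ correspond either to setting the length of a black edge $e$ to $0$ (where, by the first gluing condition, $\omega_\gamma$ restricts to $\omega_{\gamma/e}$) or to setting it to $\infty$ (where, by the second, it restricts to $\omega_{e\backslash\gamma}$). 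Summed with appropriate signs, these match the cellular coboundary dual to the formula $\partial\gamma=\sum_e[\gamma/e+e\backslash\gamma]$; the $\Aut(\gamma)$-invariance of $\omega_\gamma$ ensures that the integral descends unambiguously to the orbi-cell quotient after dividing by $|\Aut(\gamma)|$.

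For the quasi-isomorphism, I would use the skeletal filtration of $\sbarlcmp$ and the induced filtrations on both complexes, applying the $5$-lemma to the long exact sequences of the pairs $(X_p,X_{p-1})$, where $X_p$ denotes the union of orbi-cells of dimension $\leq p$. This reduces the global statement to the local assertion that, for each orbi-cell $\gamma$ of dimension $p$, the complex of $\Aut(\gamma)$-invariant forms on the cube $[0,\infty]^p$ that vanish on every face corresponding to a black-edge contraction or an infinite-length edge has cohomology concentrated in degree $p$ and equal to $\mathbb{R}$, with integration providing the identification. Non-equivariantly this is the classical computation $H^\bullet([0,1]^p,\partial[0,1]^p)\cong H^\bullet(S^p)$; the equivariant refinement follows by averaging, which is permissible because we are working over $\mathbb{R}$ and $|\Aut(\gamma)|$ is invertible.

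The main obstacles will be the careful bookkeeping of orientation signs needed to match the boundary contributions with the cellular coboundary, and checking that the smooth structure prescribed at $\infty$ (via the diffeomorphism $[0,\infty]\cong[0,1]$ stipulated in the definition) is compatible with Stokes' theorem on the compactified cube, so that the boundary integrals on the $\infty$-faces behave as expected. Once these local pieces are in place, the global assembly via the $5$-lemma---equivalently, the collapse of the skeletal spectral sequence at $E_2$---yields the desired quasi-isomorphism.
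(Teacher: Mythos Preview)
Your approach is essentially the same as the paper's: both filter by skeleta, reduce via the associated long/short exact sequences to a local relative-Poincar\'e-lemma computation on each orbi-cell, and handle the equivariance by averaging (exploiting that $\Aut(\gamma)$ is finite and we work over $\mathbb{R}$).

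There is, however, a genuine point you have glossed over. To even form the long exact sequence of the pair $(X_p,X_{p-1})$ on the orbi-cellular-form side, you need the restriction map $\Omega^\bullet(X_p)\to\Omega^\bullet(X_{p-1})$ to be \emph{surjective}; equivalently, every $\Aut$-invariant orbi-cellular form on the $(p-1)$-skeleton must extend to one on the $p$-skeleton. This is not automatic: the data on the boundary faces of a cube must be patched to a smooth form on the whole cube, and the extension must in addition be $\Aut(\gamma)$-invariant. The paper identifies this extension property as the only nontrivial step, and proves it by a Sullivan-style induction over the faces of the cube (using projection maps and bump functions), followed by symmetrisation over $\Aut(\gamma)$; the fact that the boundary data are already invariant is what guarantees the symmetrised form still restricts correctly. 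By contrast, the issues you flag as the ``main obstacles'' (orientation signs, and the smooth structure at $\infty$) are essentially routine once the diffeomorphism $[0,\infty]\cong[0,1]$ is fixed. So your outline is correct in shape, but the actual technical content of the proof lies in the extension lemma, which you should make explicit.
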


\begin{proof}
Here we will try to be brief. Consider the canonical filtration $F_k\sbarlcmp$ of $\sbarlcmp$ by orbi-cells of increasing dimension. We argue by induction up this filtration using the short exact sequence
\begin{equation} \label{eqn_sesforms}
\xymatrix{ 0 \ar[r] & K^\bullet \ar[r] \ar[d]_{\int} & \Omega^\bullet(F_{k+1}) \ar[r] \ar[d]_{\int} & \Omega^\bullet(F_k) \ar[r] \ar[d]_{\int} & 0 \\ 0 \ar[r] & C^\bullet(F_{k+1}/F_k) \ar[r] & C^\bullet(F_{k+1}) \ar[r] & C^\bullet(F_k) \ar[r] & 0 }
\end{equation}
where $K^\bullet$ is the complex of orbi-cellular forms on $F_{k+1}\sbarlcmp$ which vanish on $F_k\sbarlcmp$. The complex $C^\bullet(F_{k+1}/F_k)$ likewise denotes the complex of orbi-cellular cochains on $F_{k+1}\sbarlcmp$ which vanish on $F_k\sbarlcmp$, and just consists of linear functions on $(k+1)$-orbi-cells with trivial differential. It follows from the same arguments that establish the Poincar\'e Lemma for differential forms with compact support, together with the fact that taking invariants with respect to the action of a finite group commutes with cohomology, that the induced map
\[ \int:K^\bullet \to C^\bullet(F_{k+1}/F_k) \]
is a quasi-isomorphism. Hence, if we can prove the existence of \eqref{eqn_sesforms}, Theorem \ref{thm_formiso} will follow by induction.

The only property that needs to be established is the extension property for orbi-cellular forms, namely that the top right hand map is surjective. Given a form $\omega^k\in\Omega^\bullet(F_k)$, we must extend this to a form $\omega^{k+1}\in\Omega^\bullet(F_{k+1})$ by assigning an invariant form
\[\omega^{k+1}_\gamma\in\Omega^\bullet([0,\infty]^{E_{\mathrm{Black}}(\gamma)})\]
to every $(k+1)$-orbi-cell $\gamma$, whose restriction to the boundary of $\gamma$ coincides with the form $\omega^k$:
\begin{equation} \label{eqn_extdummy}
\begin{split}
i_{\gamma,e}^*[\omega_\gamma^{k+1}] &= \omega_{\gamma/e}^k \\
i_{e,\gamma}^*[\omega_\gamma^{k+1}] &= \omega_{e\backslash\gamma}^k \\
\end{split}
\end{equation}

One proceeds by induction on the faces, as in \cite{sullivan}, to extend $\omega^k$ to a form $\omega^{k+1}_\gamma$ satisfying \eqref{eqn_extdummy} using projection maps onto opposite faces of the cube and functions which are 1 near the faces and 0 far away from them. To make this form $\Aut(\gamma)$-invariant, we symmetrise by averaging over the action of the group. That the resulting form continues to satisfy \eqref{eqn_extdummy} follows from the fact that $\omega^k$ was originally an invariant form.
\end{proof}

\begin{rem}
To prove the corresponding statements for the open cell complexes $[\mspc\times\mathbb{R}_+^n]/\mathbb{S}_n$ and $\slcmp$, one ought to replace the exact sequence \eqref{eqn_sesforms} with a Mayer-Vietoris sequence. However, since we do not actually require such a theorem, we will not endeavor to provide the details.
\end{rem}

\section{Modular operads and topological field theories} \label{sec_operad}

We begin by recalling the definition of a modular operad due to Getzler and Kapranov \cite{getkap} and then explain how this can be used to describe topological field theories.

\begin{defi}
A stable $\mathbb{S}$-module is a collection of vector spaces, or more generally, chain complexes
\[\mathcal{V}((g,n))\]
defined for $g,n\geq 0$ such that $2g+n-2>0$ and equipped with an action of $\mathbb{S}_n$ on each $\mathcal{V}((g,n))$. Morphisms of $\mathbb{S}$-modules are just equivariant maps respecting the grading by $g$ and $n$.

Given a finite set $I$ we define
\[ \mathcal{V}((g,I)):=\left[\bigoplus_{\begin{subarray}{c} \text{bijections} \\ \{1,\ldots,n\}\to I \end{subarray}} \mathcal{V}((g,n))\right]_{\mathbb{S}_n} \]
where $\mathbb{S}_n$ acts on $\mathcal{V}((g,n))$ and by permuting summands.
\end{defi}

\begin{defi}
A stable graph (with legs) is a set $G$, called the set of \emph{half-edges}, together with the following data:
\begin{enumerate}
\item
A disjoint collection of pairs of elements of $G$, denoted by $E(G)$, called the set of \emph{edges} of $G$. Those half-edges which are not part of an edge are called the \emph{legs} of $G$.
\item
A partition of $G$, denoted by $V(G)$, called the set of \emph{vertices} of $G$. We will refer to the cardinality $n(v)$ of a vertex $v\in V(G)$ as the \emph{valency} of $v$.
\item
For every vertex $v\in V(G)$, a nonnegative integer $g(v)$ called the \emph{genus} of $v$. We impose the condition that $2g(v)+n(v)-2$ must be positive at every vertex $v\in V(G)$.
\end{enumerate}
In addition, a stable graph $G$ must be \emph{connected}.
\end{defi}

\begin{rem}
Note that the definition of a \emph{stable graph} above should not be confused with the definition of a \emph{stable ribbon graph} outlined in Definition \ref{def_stabrib}. This is why we have chosen chosen to denote the former by $G$ and the latter by $\gamma$.
\end{rem}

The \emph{genus} of a stable graph is defined by the formula
\[ g(G):=\dim(H_1(G))+\sum_{v\in V(G)} g(v), \]
where $H_1(G)$ is the first homology group of the geometric realisation of $G$. The category $\gcat$ is defined to be the category whose objects are stable graphs of genus $g$ with $n$ legs, which are labelled from 1 to $n$. The morphisms are isomorphisms of stable graphs which preserve the labeling of the legs.

Given a stable graph $G$ and a stable $\mathbb{S}$-module $\mathcal{V}((g,n))$ we define
\[ \mathcal{V}((G)):= \bigotimes_{v\in V(G)}\mathcal{V}((g(v),v)). \]
There is a natural endofunctor $\mathbb{M}$ on the category of stable $\mathbb{S}$-modules defined by the formula
\[ \mathbb{M}\mathcal{V}((g,n)):=\underset{G\in\text{Iso}\Gamma((g,n))}{\colim}\mathcal{V}((G)). \]
There are natural transformations
\begin{displaymath}
\begin{split}
& \mu:\mathbb{M}\mathbb{M}\to\mathbb{M}, \\
& \eta:\id\to\mathbb{M}
\end{split}
\end{displaymath}
The map $\mu$ is given by gluing the legs of the stable graphs, located at the vertices of some parent stable graph of which they are all subgraphs, along the edges of that parent stable graph. The map $\eta$ is just the map which associates to a $\mathbb{S}$-module $\mathcal{V}((g,n))$, the corolla whose single vertex is decorated by that $\mathbb{S}$-module. These natural transformations form a triple $(\mathbb{M},\mu,\eta)$, cf. \cite{getkap}.

\begin{defi}
A modular operad is an algebra over the triple $(\mathbb{M},\mu,\eta)$. A morphism of modular operads is just a morphism of such algebras. These maps are required to commute with the differentials.
\end{defi}

\begin{defi}
Let $V$ be a finite-dimensional complex with a symmetric, even inner product $\innprod$ such that
\[ \langle d(x),y \rangle + (-1)^x\langle x,d(y) \rangle = 0. \]
The endomorphism modular operad of $V$, denoted by $\mathcal{E}[V]$, is the modular operad whose underlying $\mathbb{S}$-module is $\mathcal{E}[V]((g,n)):=(V^*)^{\otimes n}$. The structure map
\[ \mathbb{M}\mathcal{E}[V]\to\mathcal{E}[V] \]
is defined by contracting the tensors in $\mathcal{E}[V]$ along the edges of the graph using the inverse inner product $\innprod^{-1}$.
\end{defi}

\begin{rem}
This definition differs slightly from Getzler and Kapranov's in the use of $V^*$ rather than $V$, but is equivalent as the modular operads are canonically isomorphic.
\end{rem}

\begin{defi}
An \emph{algebra} over a modular operad $\mathcal{A}$ is a vector space $V$ together with a morphism of modular operads $\mathcal{A}\to\mathcal{E}[V]$.
\end{defi}

Now we want to define a modular operad whose algebras are open topological field theories (cf. \cite{dualfeyn}).

\begin{defi}
Given integers $\lambda\geq 0$ and $\nu, n \geq 1$, let $M_{\lambda,\nu,n}$ denote the category of connected compact oriented topological surfaces of genus $\lambda$ with $\nu$ boundary components and $n$ labelled intervals embedded in the boundary. That is an object in $M_{\lambda,\nu,n}$ is a connected compact oriented surface $S$ of genus $\lambda$ with $\nu$ boundary components, together with the data of $n$ orientation preserving embeddings $f_i:[0,1]\to\partial S$ for $1\leq i\leq n$.

\begin{figure}[htp]
\centering
\includegraphics{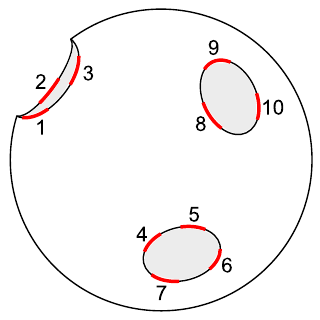}
\caption{Objects in $M_{\lambda,\nu,n}$ are surfaces with parameterised labelled intervals embedded in the boundary.}
\end{figure}

A morphism in $M_{\lambda,\nu,n}$ is just a morphism of topological spaces preserving the orientation and the labelled embedded intervals. We denote the set of isomorphism classes\footnote{This makes sense since there are only a finite number of such isomorphism classes.} by $[M_{\lambda,\nu,n}]$.
\end{defi}

\begin{defi}
The modular operad $\OTFT$ is defined as follows: its underlying $\mathbb{S}$-module is
\[ \OTFT((g,n)):=\bigoplus_{\begin{subarray}{c} \lambda\geq 0, \ \nu\geq 1: \\ 2\lambda+\nu-1=g \\  \end{subarray}} \gf[M_{\lambda,\nu,n}], \]
where $\mathbb{S}_n$ acts by relabeling the intervals embedded in the boundary. The structure map
\[ \mathbb{M} \OTFT \to \OTFT \]
of the modular operad is given by gluing surfaces along the embedded intervals on the boundary using the structure of the graph.
\end{defi}

\begin{rem}
Note that, somewhat awkwardly from a notational perspective, the genus $g$ in the modular operad $\OTFT$ does not correspond to the genus $\lambda$ of the surface, but to the quantity $2\lambda+\nu-1$, which is the dimension of the first homology group of the surface.
\end{rem}

We call an algebra  over the operad $\OTFT$ an \emph{open topological field theory}. Hence, an open topological field theory is simply a way to assign multilinear operations $V^{\otimes n}\to\gf$ to compact surfaces with boundary in such a way that these operations depend only on the topological type of the surface, and such that these operations behave in a coherent manner with respect to the possible ways in which these surfaces can be glued together. This is just a reformulation of the axioms of Atiyah-Segal et al. in terms of modular operads.

Given a modular operad $\mathcal{A}((g,n))$, we can restrict to its genus 0 part $\mathcal{A}((0,n))$. The structure map $u:\mathbb{M}\mathcal{A}\to\mathcal{A}$ of the modular operad restricts to its genus 0 part to provide $\mathcal{A}((0,n))$ with the structure of a \emph{cyclic} operad. This defines a forgetful functor from the category of modular operads to the category of cyclic operads. This functor has a left adjoint called \emph{modular closure}, which was introduced in \cite{modclose}. The modular closure of a cyclic operad $\mathcal{A}$ will be denoted by $\overline{\mathcal{A}}$. Its genus 0 part coincides with $\mathcal{A}$. The modular closure $\overline{\mathcal{A}}$ of a cyclic operad $\mathcal{A}$ which is generated by an $\mathbb{S}$-module $\mathcal{V}((n))$ modulo some set of relations is simple to construct; one simply takes the free modular operad $\mathbb{M}\mathcal{V}$ modulo the same relations. Hence, algebras over the \emph{modular} operad $\overline{\mathcal{A}}$ are canonically identified with algebras over the \emph{cyclic} operad $\mathcal{A}$.

The following is Theorem 2.7 of \cite{dualfeyn}.

\begin{theorem} \label{thm_otft}
The modular operad $\OTFT$ is canonically isomorphic to $\overline{\Ass}$, the modular closure of the cyclic associative operad.
\end{theorem}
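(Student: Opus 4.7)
The plan is to exploit the universal property of the modular closure. The forgetful functor from modular to cyclic operads takes $\OTFT$ to its genus-$0$ part $\OTFT((0,-))$, so a canonical morphism $\Phi: \overline{\Ass} \to \OTFT$ is produced as soon as one identifies $\OTFT((0,-))$ with $\Ass$ as cyclic operads. In genus $0$, the stability condition $2\lambda+\nu-1 = 0$ forces $\lambda=0$ and $\nu=1$, so an object of $M_{0,1,n}$ is an oriented disk with $n$ labelled parameterised intervals in its boundary circle. Such a disk is classified up to isomorphism by the cyclic ordering of the labels around its boundary, giving an $\mathbb{S}_n$-equivariant isomorphism $\OTFT((0,n)) \cong \Ass((0,n))$ compatible with the genus-$0$ gluing structure. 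The resulting $\Phi: \overline{\Ass} \to \OTFT$ then admits the following concrete description: a stable graph $G$ with $\Ass$-decorated vertices is sent to the surface obtained by thickening each vertex into a disk with the prescribed cyclic ordering of incident half-edges, thickening edges into strips, and gluing.

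To show $\Phi$ is surjective, I must exhibit every $S \in M_{\lambda,\nu,n}$ as a gluing of genus-$0$ disks-with-intervals. This amounts to producing a \emph{cut system} on $S$: a collection of disjoint properly embedded arcs whose endpoints lie in $\partial S$ between the labelled intervals, and which cut $S$ into disks. Existence follows by induction on $(\lambda,\nu,n)$: if $\lambda > 0$ or $\nu > 1$, some arc reduces the complexity; the base case reduces to cutting a disk of large valency into triangles. The cut system exhibits $S$ as the image under $\Phi$ of the stable graph dual to the cutting, with each vertex decorated by the cyclic arrangement of intervals and cut endpoints on the corresponding disk.

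Injectivity is the main technical obstacle: different cut systems on the same $S$ give distinct elements of the free modular operad $\mathbb{M}\Ass((g,n))$ with the same image in $\OTFT$, and I must show their difference lies in the associativity ideal defining $\overline{\Ass}$. The strategy is to reduce to two elementary moves. First, refining a cut system within a single disk (adding arcs interior to a vertex disk) corresponds at that vertex to rewriting an element of $\Ass((0,m))$ as an iterated composition of lower-arity generators, which is an identity in $\Ass$ by associativity, and therefore in $\overline{\Ass}$. Second, replacing an arc by another arc that crosses it once inside the union of two adjacent vertex disks is a \emph{Whitehead flip}, which is precisely the pentagon / associativity relation in $\Ass((0,4))$ transported through the modular operad structure. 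The geometric input required is the classical fact that any two cut systems on a surface-with-boundary-intervals are related by a finite sequence of isotopies, refinements, and Whitehead flips; this is proved by putting the two systems in minimal position and inducting on the number of intersection points. I expect the careful bookkeeping of orientations, labels, and the reduction of arbitrary cut-system modifications to these elementary moves to be the hardest part of the argument.
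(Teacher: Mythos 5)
The paper does not actually prove this statement: it is quoted from Chuang--Lazarev \cite{dualfeyn} (their Theorem 2.7) and left without proof, so there is nothing in the source to compare against line by line. Judged on its own merits, your argument is essentially the standard proof and, up to a change of picture, the one in the cited reference: where you cut the surface along arcs into disks, Chuang--Lazarev work with the dual object, a ribbon graph whose thickening recovers the surface, and your ``Whitehead flip'' of arcs is exactly the Whitehead move (contract an edge, re-expand the other way) on ribbon graphs, which is visibly the associativity relation in $\Ass((0,4))$. Your setup is sound: the adjunction gives the canonical map $\overline{\Ass}\to\OTFT$ once $\OTFT((0,n))$ is identified with cyclic orders on $n$ letters ($2\lambda+\nu-1=0$ does force a disk, as you say), and surjectivity via cut systems works provided you arrange that every complementary disk carries at least three boundary segments (arcs or labelled intervals), so that each vertex is stable, and that every boundary circle of $S$ meets at least one arc or interval, since otherwise some complementary piece is not a disk.

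Two caveats on the injectivity step, which you correctly identify as the crux. First, the ``classical fact'' you invoke --- that any two arc systems cutting $S$ into disks are related by isotopy, refinement and flips --- is a genuine theorem (Harer, Hatcher, Penner, Mosher on arc systems and ideal triangulations) and citing it is legitimate, but your proof is only as complete as that citation. Second, be careful with flips of an arc whose two sides lie on the \emph{same} complementary disk (dually, a loop at a vertex of the ribbon graph): such a flip is not literally an instance of the two-vertex associativity relation, and one must first split the vertex using associativity so that the loop becomes an edge between distinct vertices, relying on the fact that the ideal generated by the associativity relation in the free modular operad is closed under self-gluings. Also, the relevant relation in $\Ass((0,4))$ is associativity itself, not the pentagon, which is a coherence condition one categorical level up. None of this is fatal, but it is exactly where the bookkeeping you defer actually lives.
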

\noproof

This leads to the following version of a well-known theorem due to Atiyah-Segal et al.

\begin{cor} \label{cor_otft}
The datum of an open topological field theory is nothing more than the datum of a differential graded Frobenius algebra.
\end{cor}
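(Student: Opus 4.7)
The plan is to combine Theorem \ref{thm_otft} with the general adjunction between modular and cyclic operads that was recalled just before the corollary. By Theorem \ref{thm_otft}, an open topological field theory, i.e.\ an algebra over $\OTFT$, is the same as an algebra over the modular operad $\overline{\Ass}$. Since $\overline{\Ass}$ is the modular closure of the cyclic operad $\Ass$, the sentence immediately preceding the corollary, ``algebras over the modular operad $\overline{\mathcal{A}}$ are canonically identified with algebras over the cyclic operad $\mathcal{A}$,'' applies verbatim to identify such algebras with algebras over the cyclic associative operad $\Ass$.

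Thus the corollary reduces to the purely operadic statement that an algebra over $\Ass$, in the category of finite-dimensional chain complexes equipped with a symmetric, even, $d$-compatible inner product (this is the target data built into $\mathcal{E}[V]$), is exactly a differential graded Frobenius algebra. To verify this, I would unwind the definitions: a morphism of cyclic operads $\Ass\to\mathcal{E}[V]$ is determined by the image $\mu\in V^\ast\otimes V^\ast\otimes V^\ast$ of the generating binary cyclic operation, which by the cyclic structure corresponds to a trilinear form $\langle x,y,z\rangle$ invariant under cyclic permutation. Setting $xy$ to be the element with $\langle xy,z\rangle=\langle x,y,z\rangle$ (using the non-degeneracy of $\innprod$), the associativity relation of $\Ass$ translates into $\langle(xy)z,w\rangle=\langle x(yz),w\rangle$ for all $w$, hence $(xy)z=x(yz)$. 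Compatibility with the differentials comes from the chain-level morphism condition. Conversely, every such DG Frobenius algebra clearly determines a morphism of cyclic operads via the same formula.

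The main (and essentially only) obstacle is this last unpacking; everything else is formal. It is not really hard, but one should be careful that the cyclic symmetry of $\langle x,y,z\rangle$ corresponds precisely to the cyclic-operad $\mathbb{Z}/3$-action on $\Ass((0,2))$ in the convention used here, and that the evenness of the inner product together with the Koszul sign conventions in $\mathcal{E}[V]$ produce the standard graded Frobenius identity $\langle xy,z\rangle=(-1)^{|x|(|y|+|z|)}\langle yz,x\rangle$ rather than an inconsistent sign. Once these sign conventions are matched, the chain of identifications $\text{algebras over }\OTFT\,\leftrightarrow\,\text{algebras over }\overline{\Ass}\,\leftrightarrow\,\text{algebras over }\Ass\,\leftrightarrow\,\text{DG Frobenius algebras}$ is complete.
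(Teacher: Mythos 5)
Your proposal follows exactly the paper's route: invoke Theorem \ref{thm_otft}, use the identification of algebras over a modular closure with algebras over the underlying cyclic operad, and then identify algebras over the cyclic associative operad with differential graded Frobenius algebras (a fact the paper simply cites as well-known from Getzler--Kapranov, whereas you sketch the unpacking explicitly). The argument is correct and matches the paper's proof in substance.
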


\begin{proof}
This follows from Theorem \ref{thm_otft}, since it is well-known \cite{getkap} that algebras over the cyclic associative operad are differential graded Frobenius algebras.
\end{proof}

Let us simply explain the consequences of the above theorems, rather than the details. Any open topological field theory assigns a 3-valent tensor to the unique surface $D$ of genus 0 with 3 labelled intervals embedded into its single boundary component. This 3-valent tensor is just the structure map of the Frobenius algebra of Corollary \ref{cor_otft}. Since it is clear that any surface with nonempty boundary could be constructed from copies of $D$ by gluing the intervals along the boundary together, that is to say that $D$ generates $\OTFT$, any open topological field theory is completely determined by this 3-valent tensor. Corollary \ref{cor_otft} simply says that given any differential graded Frobenius algebra, there is an open topological field theory which assigns to the surface $D$ the structure map of this Frobenius algebra.

\section{The construction} \label{sec_construction}

In this section we formulate the construction of Costello \cite{costform} producing classes on the moduli space in the context of a \emph{finite-dimensional} differential graded Frobenius algebra. Let $A$ be a finite-dimensional differential graded Frobenius algebra; that is a differential $\mathbb{Z}/2\mathbb{Z}$-graded algebra with a symmetric even invariant inner product $\innprod$:
\begin{displaymath}
\begin{split}
\langle a,bc \rangle &= \langle ab,c \rangle \\
\langle d(a),b \rangle &= -(-1)^a\langle a,d(b) \rangle
\end{split}
\end{displaymath}
Furthermore, we assume that $A$ comes equipped with an \emph{abstract Hodge decomposition}\footnote{This was referred to as a \emph{canonical} Hodge decomposition in \cite{minmod} and a \emph{harmonious} Hodge decomposition in \cite{hodge}.} (cf. \cite{minmod}), which consists of a pair of operators $s,\pi:A\to A$ satisfying:
\begin{equation} \label{eqn_hodge}
\begin{split}
ds+sd &= \id - \pi \\
s^2 &= 0 \\
\pi^2 &= \pi \\
d\pi &= \pi d =0 \\
\pi s &= s\pi=0 \\
\langle s(a),b \rangle &= (-1)^a \langle a, s(b) \rangle \\
\langle \pi(a),b \rangle &= \langle a,\pi(b) \rangle
\end{split}
\end{equation}
The above data is equivalent to a decomposition
\[ A=\im(d)\oplus\im(\pi)\oplus\im(s)\]
of $A$ into an acyclic subspace $\im(d)\oplus\im(s)$ and a subspace $\im(\pi)\cong H(A)$. Such Hodge decompositions always exist, cf. \cite{minmod}. Later, we will explain how the construction is independent of this data.

From these operators we can form a ``Laplacian'' $\Delta:=ds+sd$. Now we want to consider a deformation of the inverse inner product. We define a one-parameter family of inner products
\[K_t:=\innprod^{-1}_t\in A\otimes A \]
depending on a parameter $t\in\mathbb{R}_{\geq 0}$, such that at $t=0$ we recover our original inner product. This family is constructed to satisfy the differential equation
\begin{equation} \label{eqn_heatflow}
\frac{d}{dt}[K_t]=-(\Delta\otimes\id)[K_t]
\end{equation}
This differential equation is easily solved by the explicit formula
\[K_t:=(e^{-t\Delta}\otimes\id)\left[\innprod^{-1}\right].\]
Since $\Delta$ is idempotent, we have the following formula for $e^{-t\Delta}$:
\begin{equation} \label{eqn_exp}
e^{-t\Delta}=\pi+e^{-t}\cdot\Delta.
\end{equation}

We can consider our deformed inner product $K_t$ as an element in $A^{\otimes 2}\otimes C^\infty(\mathbb{R}_{\geq0})$. Now we will construct an inner product valued differential form $\alpha\in A^{\otimes 2}\otimes\Omega^\bullet(\mathbb{R}_{\geq 0})$ by setting
\begin{equation} \label{eqn_closedform}
\alpha:=K_t+(s\otimes\id)[K_t]\cdot dt.
\end{equation}
This form is a cycle with respect to the combination of the de Rham differential on $\mathbb{R}_{\geq 0}$ and the differential in the Frobenius algebra $A$. This follows from the ``heat equation'' \eqref{eqn_heatflow} and the fact that $\Delta$ commutes with $d$.


Let us recall how the construction works. Consider the decorated moduli space $[M_{g,n}\times\mathbb{R}_+^n]/\mathbb{S}_n$. We will define a closed orbi-cellular form on each cell of $[M_{g,n}\times\mathbb{R}_+^n]/\mathbb{S}_n$ and show that they glue together to yield a closed form on the moduli space. Suppose we take an open cell indexed by a ribbon graph $\gamma$. We use this to define a map of complexes
\begin{equation} \label{eqn_contract}
[A\otimes A\otimes \Omega^\bullet(\mathbb{R}_{\geq 0})]^{\otimes |E(\gamma)|}\to\Omega^\bullet(\mathbb{R}_{\geq 0})^{\otimes |E(\gamma)|}
\end{equation}
where $|E(\gamma)|$ is the number of edges of $\gamma$. We place one copy of $A^{\otimes 2}\otimes\Omega^\bullet(\mathbb{R}_{\geq 0})$ at each edge of the graph. At each vertex of valency $n$ we use the map $t_n:A^{\otimes n}\to\gf$
\begin{equation} \label{eqn_product}
t_n(a_1,\ldots,a_n):=\langle a_1\cdots a_{n-1},a_n \rangle,
\end{equation}
which is a map of complexes. So our desired map \eqref{eqn_contract} is
\[T:=\bigotimes_{v\in V(\gamma)}t_{|v|}\]
where $V(\gamma)$ denotes the vertices of $\gamma$.

\begin{figure}[htp]
\centering
\includegraphics[scale=1.2]{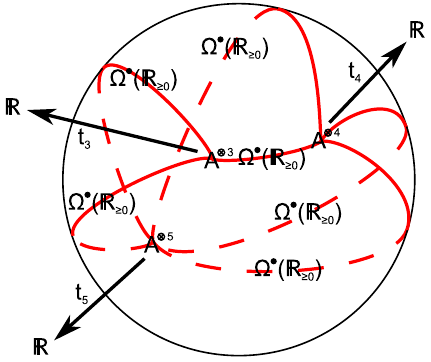}
\caption{Applying the cyclically invariant map $t_n$ to the vertices of the ribbon graph yields a differential form on the region of the moduli space $\mathcal{M}_{0,5}$ corresponding to that ribbon graph.}
\end{figure}

Consider the closed differential form
\[ \alpha^{\otimes |E(\gamma)|} \in [A^{\otimes 2}\otimes\Omega^\bullet(\mathbb{R}_{\geq 0})]^{\otimes |E(\gamma)|}.\]
We apply the map of complexes \eqref{eqn_contract} to it to produce a closed form $\omega_{\gamma}\in\Omega^\bullet(\mathbb{R}_{\geq 0})^{\otimes |E(\gamma)|}$. Using the cross product we regard it as a form $\omega_\gamma\in\Omega^\bullet(\mathbb{R}_{\geq 0}^{E(\gamma)})$, i.e. a closed form on the orbi-cell $\gamma$. The form $\omega_\gamma$ is well-defined and independent of the choices implicit in its construction. Furthermore, it is $\Aut(\gamma)$-invariant.

It remains to show that these forms $\omega_\gamma$ glue together to yield a form on the decorated moduli space. We want to show that $\omega_{\gamma/e}$ is the pull-back $i_{\gamma,e}^*[\omega_\gamma]$. The pull-back of the differential form $\alpha\in A^{\otimes 2}\otimes\Omega^\bullet(\mathbb{R}_{\geq 0})$ to $t=0$ is just the tensor $K_0:=\innprod^{-1}$. Consequently, the differential form $i_{\gamma,e}^*[\omega_\gamma]$ is obtained by replacing the differential form $\alpha$ on the shrinking edge $e$ (not loop) with the inverse inner product and applying the map $T$. One can check that joining the tensors $t_n$ and $t_m$ with the inverse inner product $\innprod^{-1}$ yields the tensor $t_{n+m-2}$, which is precisely the map which is placed at the newly formed vertex in $\gamma/e$ which results from collapsing the edge $e$, hence these forms glue to a global form on the moduli space.

\section{Extension to the compactification} \label{sec_extend}

In this section we show that the construction outlined in the previous section extends to the compactification $\slcmp:=\lcmp$ of moduli space defined in Section \ref{sec_forms}. As was mentioned in Remark \ref{rem_contractible}, this compactification is in fact contractible, hence Costello's construction produces something trivial in the case of a finite-dimensional algebra (except of course for the class in $H^0([\mspc\times\mathbb{R}_+^n]/\mathbb{S}_n)\cong\gf$, which we will see corresponds to the amplitude of the associated open topological field theory). This extension is not possible in the infinite-dimensional case due to the presence of ultra-violet divergences in the integrands, hence the classes obtained in this case may be nontrivial. Nonetheless, we show that if our finite-dimensional differential graded Frobenius algebra is \emph{contractible}, this form is the pull-back of a cocycle on a noncontractible space. Hence, if our finite-dimensional algebra is contractible, we may hope to be able to use it to produce nontrivial classes.

Let us now show that the closed form we constructed on $[\mspc\times\mathbb{R}_+^n]/\mathbb{S}_n$ in Section \ref{sec_construction} extends to the partial compactification $\slcmp$. To each open orbi-cell indexed by a stable ribbon graph $\gamma$, we must assign a differential form $\omega_\gamma$. We assign, as before, the differential form $\alpha\in A^{\otimes 2}\otimes\Omega^\bullet(\mathbb{R}_{\geq 0})$ to each edge of $\gamma$. To each vertex $v\in V(\gamma)$ we assign a map of complexes
\[t_v:A^{\otimes|v|}\to \gf\]
determined as follows. The vertex $v$ is decorated by a topological surface, with the incident half-edges of $v$ embedded in the boundary of this surface. Since $A$ is a differential graded Frobenius algebra it determines, by Corollary \ref{cor_otft}, an algebra over the modular operad $\OTFT$. This open topological field theory assigns to this topological surface the desired map $t_v:A^{\otimes|v|}\to\gf$. Note that when this topological surface is a disk, this map agrees with the map \eqref{eqn_product} defined in Section \ref{sec_construction}, hence this construction will extend the construction described in the previous section.

\begin{figure}[htp] \label{fig_otft}
\centering
\includegraphics[scale=1.2]{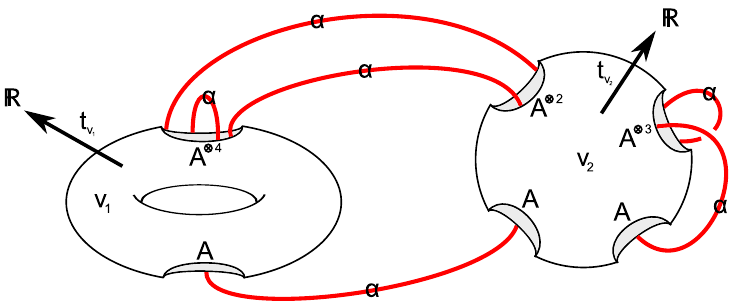}
\caption{At each vertex of our stable ribbon graph, we contract the incoming tensors using the open topological field theory determined by the Frobenius algebra $A$.}
\end{figure}

Tensoring these maps together yields a map of complexes
\begin{equation} \label{eqn_otft}
T:=\bigotimes_{v\in V(\gamma)} t_v: A^{\otimes 2|E(\gamma)|}\to\gf.
\end{equation}
Hence, when this map is applied to the differential form
\[ \alpha^{\otimes |E(\gamma)|}\in[A^{\otimes 2}\otimes \Omega^\bullet(\mathbb{R}_{\geq 0})]^{\otimes |E(\gamma)|}\]
defined by Equation \eqref{eqn_closedform}, we get a closed form $\omega_\gamma\in\Omega^\bullet(\mathbb{R}^{E(\gamma)}_{\geq 0})$ on this orbi-cell. It follows from the axioms of an open topological field theory, as expressed in Section \ref{sec_operad} through the language of modular operads, that the form $\omega_\gamma$ is $\Aut(\gamma)$-invariant and does not depend upon the various choices implicit in its construction.

It remains to check that these forms glue to yield a form on the space $\slcmp$. To do this we must show that
\[ i_{\gamma,e}^*[\omega_\gamma] = \omega_{\gamma/e}. \]
The pullback $i_{\gamma,e}^*[\omega_\gamma]$ of $\omega_\gamma$ is obtained by replacing the differential form $\alpha$ on the edge (or loop) $e$, by its value at $t=0$, the inverse inner product $\innprod^{-1}$. Hence we contract the corresponding tensors in the map $T:A^{\otimes 2|E(\gamma)|}\to \gf$ using $\innprod^{-1}$, and apply the result to the form $\alpha^{\otimes |E(\gamma/e)|}$ to arrive at the value for the pullback $i_{\gamma,e}^*[\omega_\gamma]$.

\begin{figure}[htp]
\centering
\includegraphics{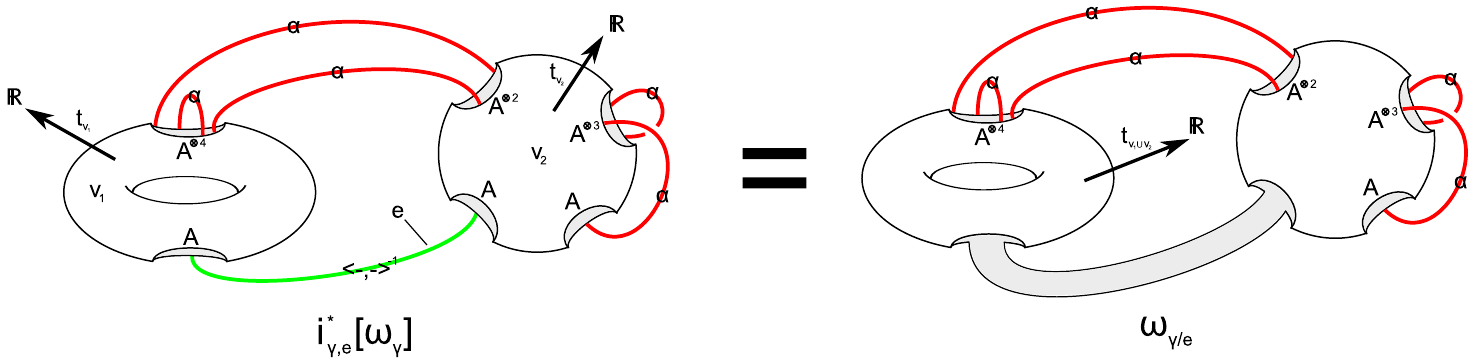}
\caption{The axioms of an open topological field theory ensure that the forms $\omega_\gamma$ glue to yield a form on the compactification $\slcmp$.}
\end{figure}

Since an open topological field theory is an algebra over the modular operad $\OTFT$, it follows from these axioms that the tensor that we get from $T:A^{\otimes 2|E(\gamma)|}\to \gf$ by contracting using $\innprod^{-1}$, is the same as the tensor that is obtained from the collection of open topological field theories when we replace the edge $e$ by a thin strip and glue the ends of this strip to the corresponding topological surface(s). By definition, this is exactly the tensor that is assigned by \eqref{eqn_otft} to $\gamma/e$. Hence we see that the form $i_{\gamma,e}^*[\omega_\gamma]$ coincides with the form $\omega_{\gamma/e}$ that is assigned to the orbi-cell $\gamma/e$, and thus the forms $\omega_\gamma$ glue to yield an orbi-cellular form $\omega$ on $\slcmp$.

From this description we may understand why the problem of extending a topological conformal field theory to the boundary of the moduli space arises when the dimension of the algebra $A$ is \emph{infinite}. As we head towards the boundary of the moduli space, the complex structure on our Riemann surface degenerates. At the boundary of the moduli space we are left with nodal surfaces, some of whose irreducible components retain only their topological structure. To those components with complex structure, we may assign a conformal field theory using the heat kernel etc., as described by Costello in \cite{costform}, but to those components with only topological structure, we may assign only a topological field theory.

It is well-known that one of the immediate consequences of the axioms of a topological field theory is that the underlying space must be finite-dimensional. More accurately, when the space is infinite-dimensional, the terms of our topological field theory are infinite. Consider, for example, the number assigned by an open topological field theory to the annulus. This number is just the trace of the identity operator, i.e. the dimension of the space. Hence, when the dimension of the space is infinite, this number will be infinite, and in general the terms lying at the boundary of the moduli space contributed by the open topological field theory will be divergent. This problem does not arise with the conformal field theory because the operator $e^{-t\Delta}$ is of trace class for $t>0$, its trace being given by integrating the heat kernel over the diagonal. It is only at $t=0$, when the conformal structure on the Riemann surface degenerates, that its trace becomes infinite.

Now we consider the full compactifications $\sbarlcmp$ and $\slcmppt$ of the decorated moduli space $[\mspc\times\mathbb{R}_+^n]/\mathbb{S}_n$, rather than the partial compactification $\slcmp$. There is a map
\begin{equation} \label{eqn_collapse}
\barlcmp\to\lcmppt
\end{equation}
which collapses the boundary at infinity to the point. That is to say that it contracts all the orbi-cells with at least one white edge.

The differential form $\omega$ on $\slcmp$ that we have just constructed extends to the compactification $\sbarlcmp$. This follows from equation \eqref{eqn_exp}, from which we see that
\begin{equation} \label{eqn_limit}
\lim_{t\to\infty}\left[K_t\right]=(\pi\otimes\id)\left[\innprod^{-1}\right]\in A^{\otimes 2}.
\end{equation}
To describe the extension of the form $\omega$ to $\sbarlcmp$, it suffices to specify the forms $\omega_\gamma$ which are assigned to the orbi-cells $\gamma$ in $\sbarlcmp$ with at least one white edge. On the black edges of $\gamma$ is placed the form $\alpha\in A^{\otimes 2}\otimes\Omega^\bullet(\mathbb{R}_{\geq 0})$, as before, and on the white edges of $\gamma$ is placed the tensor \eqref{eqn_limit}, which is a cycle with respect to $d$. Applying the map $T:A^{\otimes 2|E(\gamma)|}\to\gf$ as before yields a closed form $\omega_\gamma\in\Omega^\bullet(\mathbb{R}^{E_{\mathrm{Black}}(\gamma)}_{\geq 0})$ on the orbi-cell $\gamma$. It follows from equation \eqref{eqn_limit} that these forms satisfy the necessary gluing property.

Now suppose that our differential graded Frobenius algebra is \emph{contractible}. Then we claim that the closed differential form $\omega$ on $\sbarlcmp$ is the pull-back of a cocycle on the one point compactification $\slcmppt$. Since $A$ is contractible, we have $\pi=0$ and the differential form $\omega$ vanishes as we approach the point at infinity. Now, we would like to say that this implies that $\omega$ extends to a form on $\slcmppt$, but clearly the map \eqref{eqn_collapse} which shrinks the boundaries of cells to a point is somewhat awkward to deal with in the context of smooth structure, so we will work with the orbi-cellular cochains instead. Since $\lim_{t\to\infty} K_t=0$, it clearly follows that there is an orbi-cellular cochain in $C^\bullet(\slcmppt)$ (which must be unique and a cocycle) whose pullback to $\sbarlcmp$ via the map \eqref{eqn_collapse} yields the orbi-cellular cocycle associated to the form $\omega$ on $\sbarlcmp$ by \eqref{eqn_integrate}.

\section{Dual construction} \label{sec_dual}

In this section we show the equivalence of Costello's construction, in the finite-dimensional case, to the ``dual construction'' of Kontsevich \cite{kontfeyn}.

The key formula is the following:
\begin{equation} \label{eqn_propagator}
\begin{split}
\int_0^\infty (s\otimes\id)[K_t] dt & = \int_0^\infty (se^{-t\Delta}\otimes\id) \left[\innprod^{-1}\right] dt \\
&= \int_0^\infty ([s\pi+e^{-t}\cdot s\Delta]\otimes\id)\left[\innprod^{-1}\right] dt \\
&= \left(\int_0^\infty e^{-t} dt\right)(s\Delta\otimes\id)\left[\innprod^{-1}\right] \\
&= (s\otimes\id)\left[\innprod^{-1}\right].
\end{split}
\end{equation}
The second line follows from Equation \eqref{eqn_exp}. The third and fourth lines follow from the identities \eqref{eqn_hodge} for the Hodge decomposition.

From this we can show Costello's construction agrees with Kontsevich's as follows. Given a differential graded Frobenius algebra $A$ with a Hodge decomposition, consider the corresponding closed orbi-cellular form $\omega$ on $\sbarlcmp$ defined in Section \ref{sec_extend}. By \eqref{eqn_integrate}, this orbi-cellular form gives rise to an orbi-cellular cocycle on $\sbarlcmp$ by integrating $\omega$ over the orbi-cells of $\sbarlcmp$. If the differential graded Frobenius algebra is \emph{contractible}, it was shown at the end of the previous section that this cocycle lifts via the map \eqref{eqn_collapse} to a cocycle on $\slcmppt$.

This orbi-cellular cocycle associates to every stable ribbon graph $\gamma\in C_\bullet(\slcmppt)$, the following number. One takes the map \eqref{eqn_otft}
\[ T:A^{\otimes 2|E(\gamma)|}\to\gf \]
and applies it to the form
\[ \alpha^{\otimes |E(\gamma)|}\in [A^{\otimes 2}\otimes\Omega^\bullet(\mathbb{R}_{\geq 0})]^{\otimes|E(\gamma)|} \]
defined in Equation \eqref{eqn_closedform}, to yield a differential form $T(\alpha)\in\Omega^\bullet(\mathbb{R}^{E(\gamma)}_{\geq 0})$, as in Figure \eqref{fig_otft}. One then integrates the differential form $T(\alpha)$ to produce the number assigned to this orbi-cell.

Equivalently, we can first integrate $\alpha^{\otimes |E(\gamma)|}$ to produce a tensor
\[ \int \alpha^{\otimes |E(\gamma)|} d\mathbf{t}=\left((s\otimes\id)\left[\innprod^{-1}\right]\right)^{\otimes |E(\gamma)|}\in A^{\otimes 2|E(\gamma)|},\]
and then apply the map $T$ to it. Hence, the number which is assigned to the graph $\gamma$ is computed by attaching to each edge of $\gamma$ the tensor $(s\otimes\id)\left[\innprod^{-1}\right]$ and contracting these tensors using the map $T$, which applies to each vertex of the stable ribbon graph $\gamma$, the open topological field theory assigned to that vertex by the topological surface which decorates it. This number is exactly that which is assigned to the stable ribbon graph $\gamma$ by the dual construction of Kontsevich, as formulated by Chuang-Lazarev in \cite{dualfeyn}.

\begin{rem}
Since the cocycle in $C^\bullet(\slcmppt)$ defined by Costello's construction agrees with the cocycle defined by Kontsevich's dual construction, it follows  from Proposition 6.1 of \cite{minmod} that its cohomology class does not depend upon the choice of a Hodge decomposition \eqref{eqn_hodge}.
\end{rem}

\begin{rem}
This result differs from Proposition 5.1.1 of \cite{costform}, since the proposition in the cited source concerns the construction described in Theorem 1.2 of \cite{kontfeyn}, whereas the above result concerns the construction described in Theorem 1.3 of the same paper.
\end{rem}

\end{document}